\newcommand{\field}[1]{\mathbb{#1}}   
\newcommand{\R}{\field{R}}            
\newcommand{\s}{\field{S}}
\newcommand{\I}{\int_{\R^n}}
\newcommand{\rmi}{\mathrm{i}}
\newcommand{\e}{\mathrm{e}}
\newcommand{\G}{G_k^+}
\newcommand{\usc}{u_{\mathrm{sc}}}
\renewcommand{\d}{\mathrm{d}}
\newcommand{\Is}{\int_{\field{S}^{n-1}}}
\theoremstyle{plain}
\newtheorem{theorem}{Theorem}[section]
\newtheorem{lemma}[theorem]{Lemma}
\newtheorem{assumption}[theorem]{Assumption}
\theoremstyle{definition}
\newtheorem{definition}[theorem]{Definition}
\theoremstyle{remark}
\newtheorem*{remark}{Remark}
\title{Recovery of singularities from fixed angle scattering data for biharmonic operator in dimensions two and three}
\author{Jaakko Kultima\footnote{corresponding author, Research Unit of Mathematical Sciences, P.O. BOX 3000, FIN-90014 University of Oulu}}
\begin{document}
\maketitle

\begin{abstract}
The inverse fixed angle problem for operator $\Delta^2 u + V(x,|u|) u$ is considered in dimensions $n=2,3$. We prove that the difference between an inverse fixed angle Born approximation and the function $V(\cdot,1)$ is smoother than the function $V$ itself in some Sobolev scale. This allows us to conclude that the main singularities of the perturbation $V$ can be reconstructed from the knowledge of the scattering amplitude with some fixed incident angle.
\end{abstract}

\section{Introduction}
%\begin{linenumbers}
We consider the following n-dimensional ($n=2,3$) biharmonic differential operator
$$H_4 u(x) = \Delta^2 u(x) + V(x,|u|) u(x),$$
where $\Delta^2$ is the bi-Laplacian and it is perturbed by zero-order, complex-valued function $V$, which depends on both the spatial variable $x\in\R^n$ and non-linearly on the absolute value of the argument function $u$. The scattering problem is formulated as 
\begin{equation*} 
\begin{cases} 
& H_4 u(x,k,\theta) = k^4 u(x,k,\theta),\\
& u(x,k,\theta) = u_0(x,k,\theta) + \usc(x,k,\theta),
\end{cases}
\end{equation*}
where the real-number $k>0$ is called a wavenumber and it is inversely proportional to the wavelength. The solution (total scattering field) to the differential equation is assumed to be a superposition between an incoming plane wave $u_0(x,k,\theta) = \e^{\rmi k(\theta,x)}$, coming from direction $\theta\in\s^{n-1}$, and an outgoing scattered wave $\usc$. The fact that the scattered wave is outgoing is formalised by assuming that it satisfies the Sommerfeld radiation conditions 
$$\lim_{r\to\infty} r^{\frac{n-1}{2}} \left(\frac{\partial}{\partial r} f - \rmi k f \right) = 0,$$
where $r=|x|$, for both $f=\usc$ and $f=\Delta \usc$. These radiation conditions guarantee the uniqueness for the solution of the scattering problem.
In \cite{TS2018}, this direct scattering problem was discussed in the case of a linear perturbation, and it was proved that sufficiently well-behaved solution to the original differential equation is also a solution to the Lippmann-Schwinger integral equation. Similar arguments can be applied in the non-linear case and we may reformulate the original problem as solving the following integral equation
\begin{equation}\label{LS}
    \usc(x) = -\I \G(|x-y|) V(y,|u_0+\usc|)(u_0 + \usc) \d y. 
\end{equation}
Here $\G$ is the outgoing fundamental solution of the operator $(\Delta-k^4)$ (or the kernel of the integral operator $(\Delta^2 - k^4 -\rmi 0)^{-1}$).  
The precise formula for the function $\G$ with $k>0$ in $\R^n$ is given as
\begin{align*}
    \G(|x|) &= \frac{\rmi}{8k^2}\left(\frac{|k|}{2\pi|x|}\right)^{\frac{n-2}{2}} \left( H_{\frac{n-2}{2}}^{(1)} (|k||x|)+ \frac{2\rmi}{\pi} K_{\frac{n-2}{2}}(|k||x|) \right) \\
    &=: G_k^H (|x|) + G_k^K(|x|),
\end{align*}
where $H_\nu^{(1)}$ is the Hankel function of the first kind of order $\nu$
and $K_\nu$ is the Macdonald function of order $\nu$.
Throughout this paper, we assume that the perturbation $V$ satisfies the following conditions 
\begin{assumption}\label{a1}
Function $V$ may be expanded with respect to the second argument as
$$V(x,1+s) = V(x,1) + s V^*(x,1) + \frac{s^2}{2}V^{**}(x,s^*),$$
where $V(\cdot,1),V^*(\cdot,1)\in L^p_\mathrm{loc} (\R^n)$, with some $1\le p\le \infty$ and they both meet the decay property 
$$
|V(x,1)|,|V^*(x,1)| \le \frac{C}{|x|^\mu},
$$
when $|x|\ge R$, with some constants $C,R>0$ and $\mu>n$. We assume that function $V^{**}\in L^1_\mathrm{loc}(\R^n)$ and it also satisfies the decay property above, uniformly in $|s^*|\le s$. 
\end{assumption}
The direct scattering problem of finding the unique solution for the Lippmann-Schwinger equation is studied extensively in \cite{HKST2021, KS2022} with the presence of the first order perturbation. From the point of view of this paper, the estimates obtained in those papers are not sufficient and they may be easily improved in the absence of the first order perturbation. Omitting precise proofs, we state the following theorem:
\begin{theorem}\label{t1}
Let function $V$ satisfy Assumption \ref{a1}. Then for all $\rho >0$ there exists $k_0>0$ such that the equation \eqref{LS} has a unique solution in $B_\rho = \left\{ f\in L^\infty(\R^n) :  \|f\|_{L^\infty} \le \rho \right\}$, for all $k\ge k_0$. Moreover, the following norm-estimates are satisfied
\begin{equation}\label{est1}
\|\usc\|_{L^\infty} \le C k^{-\nu},
\end{equation}
with some constant $C$ and for any
\begin{equation*}
\nu < 
\begin{cases}
2, \qquad &n=2\\
\mathrm{min}\{2, \frac{4p - 3}{p}\}, &n=3.
\end{cases}
\end{equation*}
\end{theorem}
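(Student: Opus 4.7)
The natural strategy is Banach's fixed point theorem applied to the nonlinear integral map
$$T[f](x) := -\I \G(|x-y|)\,V(y,|u_0+f|)\,(u_0+f)\,\d y$$
on the closed ball $B_\rho\subset L^\infty(\R^n)$. Fixed points of $T$ in $B_\rho$ are precisely the solutions of \eqref{LS} in $B_\rho$, so it suffices to establish, for $k$ sufficiently large, a self-mapping bound $\|T[f]\|_{L^\infty}\leq Ck^{-\nu}$ and a contraction bound $\|T[f_1]-T[f_2]\|_{L^\infty}\leq Ck^{-\nu}\|f_1-f_2\|_{L^\infty}$, both uniform on $B_\rho$. Taking $k_0$ so large that $Ck_0^{-\nu}\leq\min\{\rho,1/2\}$ then makes $T$ a strict contraction of $B_\rho$ into itself, and existence, uniqueness, and the estimate \eqref{est1} follow simultaneously from Banach's principle.

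Both bounds reduce to a common kernel estimate
$$\sup_{x\in\R^n}\I |\G(|x-y|)|\,W(y)\,\d y\leq Ck^{-\nu},$$
where $W$ runs over the weights $|V(\cdot,1)|$, $|V^*(\cdot,1)|$ and $\sup_{|s^*|\leq\rho}|V^{**}(\cdot,s^*)|$ supplied by Assumption \ref{a1}. For the self-mapping bound one parametrises $|u_0+f|=1+s$, noting that $|u_0|=1$ and $\|f\|_{L^\infty}\leq\rho$ give $|s|\leq\rho$, inserts the Taylor expansion of $V$, and uses $|u_0+f|\leq 1+\rho$ on the outer factor. For the contraction bound one writes
$$V(y,|u_0+f_1|)(u_0+f_1)-V(y,|u_0+f_2|)(u_0+f_2) = V(y,|u_0+f_2|)(f_1-f_2)+(u_0+f_1)\bigl[V(y,|u_0+f_1|)-V(y,|u_0+f_2|)\bigr],$$
invokes the Lipschitz content of the same Taylor expansion in the second argument, and uses $||u_0+f_1|-|u_0+f_2||\leq|f_1-f_2|$ to factor $\|f_1-f_2\|_{L^\infty}$ outside the integral. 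In both cases the remaining job is a pure kernel estimate.

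The analytic core is therefore this kernel bound. Split $\G=G_k^H+G_k^K$. The Macdonald piece $G_k^K$ has the same near-origin behaviour as $G_k^H$ but decays like $\e^{-ck|x|}$ at infinity, so its contribution is controlled by that of $G_k^H$ up to an exponentially small remainder and does not affect the rate. All the weight rests on $G_k^H$. Combining the standard asymptotics of $H^{(1)}_{(n-2)/2}$ (logarithmic in $n=2$ and of order $z^{-1/2}$ in $n=3$ near $z=0$, and of order $z^{-1/2}$ at $z\gg 1$ in either dimension) with the $k^{-2}$ prefactor in $\G$, one splits the $y$-integration at $|x-y|\sim k^{-1}$, estimates the near-diagonal part by H\"older's inequality against $V(\cdot,1)\in L^p_{\mathrm{loc}}$, and estimates the far part via the pointwise decay $|y|^{-\mu}$ with $\mu>n$. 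When $n=2$ the extra $k^{-1/2}$ in the oscillatory regime comfortably delivers $\nu<2$. When $n=3$ and $p\geq 3/2$, the function $1/|x-\cdot|$ lies in $L^{p'}(B_R)$ for any fixed $R$, and one again obtains $\nu<2$. The delicate case, and the main technical obstacle, is $n=3$ with $1\leq p<3/2$: here $|G_k^H(r)|\sim(k^2r)^{-1}$ is not locally $L^{p'}$, a plain H\"older is insufficient, and one must exploit the oscillatory structure of $G_k^H$ while letting the cutoff radius depend on $k$, which is precisely what produces the rate $\nu<(4p-3)/p$. The corresponding mapping properties of $\G$ are essentially those developed in \cite{HKST2021,KS2022} for the Lippmann--Schwinger equation with a first-order perturbation, and become strictly easier here because no derivative falls on the unknown $u$.
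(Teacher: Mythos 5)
Your overall architecture --- Banach's fixed point theorem on $B_\rho$, with both the self-mapping and the contraction bounds reduced to the single kernel estimate $\sup_x \I |\G(|x-y|)|\,W(y)\,\d y \le Ck^{-\nu}$ --- is exactly the route the paper has in mind (the paper omits the proof, deferring to \cite{TS2018,HKST2021,KS2022}, and the remark following the theorem confirms the fixed-point iteration). The gap is in how you propose to prove the kernel estimate in precisely the cases that produce the stated exponents. You split $\G=G_k^H+G_k^K$, bound the two pieces separately in absolute value, declare the Macdonald part a harmless remainder, and claim that for $n=3$, $1\le p<3/2$ the rate $(4p-3)/p$ comes from ``exploiting the oscillatory structure of $G_k^H$''. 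Treated separately, each piece is too singular on the diagonal for this to work: in $n=3$ both $|G_k^H|$ and $|G_k^K|$ behave like $(k^2|x|)^{-1}$ near the origin, which fails to lie in $L^{p'}_{\mathrm{loc}}$ when $p<3/2$, and the oscillation of $\e^{\rmi k|x-y|}$ cannot be converted into a pointwise $L^\infty$ bound against a merely $L^p$ potential (there is no smoothness to integrate by parts against; stationary-phase or resolvent techniques give weighted $L^2$ or $L^p\to L^{p'}$ bounds, not $L^\infty\to L^\infty$). The same defect already appears at $n=2$, $p=1$, which Theorem \ref{t1} allows: $G_k^H$ alone has a logarithmic diagonal singularity, and for an $L^1_{\mathrm{loc}}$ weight such as $W(y)=|y|^{-2}\log^{-2}(1/|y|)$ near the origin one has $\sup_x\I\log(1/|x-y|)\,W(y)\,\d y=\infty$, so your near-diagonal H\"older step breaks down.

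What actually powers the stated rates is the cancellation between the Hankel and Macdonald parts at the diagonal: $\G$ is the Green's function of $\Delta^2-k^4$ and is strictly less singular than either Helmholtz piece. In $n=2$ the logarithms cancel ($H_0^{(1)}(z)+\frac{2\rmi}{\pi}K_0(z)\to1$ as $z\to0$), so $|\G|\le Ck^{-2}$ uniformly; since Assumption \ref{a1} places the weights in $L^1(\R^n)$, this alone gives $\nu=2$ for every $p\ge1$. In $n=3$ one has $\G(|x|)=\frac{\e^{\rmi k|x|}-\e^{-k|x|}}{8\pi k^2|x|}$, hence $|\G|\le Ck^{-2}\min\{k,|x|^{-1}\}$; splitting at $|x-y|\sim1/k$, applying H\"older on each region and using the decay $|y|^{-\mu}$, $\mu>3$, for the tail yields $Ck^{-(1+3/p')}=Ck^{-(4p-3)/p}$ when $p<3/2$ and $Ck^{-2}$ otherwise --- exactly the exponents in \eqref{est1}. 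So your $k$-dependent cutoff is the right device, but it must be applied to the combined kernel rather than to $G_k^H$ alone, and no oscillation argument is needed. With that replacement (and reading Assumption \ref{a1} as providing a Lipschitz bound in the second argument, which your contraction step implicitly uses), the rest of your argument goes through as the paper intends.
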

\begin{remark}
Banach fixed-point theorem gives us an iterative way of finding the unique solution to the Lippmann-Schwinger equation. Setting $\usc^{(0)} \equiv 0$ and defining the subsequent terms in the sequence via the formula
\begin{equation}\label{eq3}
    \usc^{(j)}(x) = - \I \G(|x-y|) V(y,|u_0+\usc^{(j-1)}| ) (u_0 + \usc^{(j-1)} )\d y,
\end{equation}
we find a sequence that converges to the unique solution. For the rate of convergence we have the following, straightforward estimate
\begin{equation}\label{eq10}
\|\usc - \usc^{(j)}\|_{L^\infty} \le  \frac{C}{k^{\nu}}\|\usc-\usc^{(j-1)}\|_{L^\infty} \le \ldots \le  \frac{C}{k^{j\nu}} \|\usc - 0\|_{L^\infty}\le\frac{C}{k^{(j + 1)\nu}}.
\end{equation}
\end{remark}
For fixed $k\ge k_0$, the function $\usc$, obtained in Theorem \ref{t1}, has the following asymptotical behaviour
\begin{equation*}
    \usc(x,k,\theta) = -\dfrac{\rmi \e^{-\rmi \frac{n-1}{4}\pi}}{4(2\pi)^{\frac{n-1}{2}}}\dfrac{k^{\frac{n-7}{2}}\e^{\rmi k|x|}}{|x|^{\frac{n-1}{2}}} A(k,\theta ',\theta) + o\left(\dfrac{1}{|x|^{\frac{n-1}{2}}} \right),
\end{equation*}
as $|x|\to\infty$.
Here function $A$ is called the scattering amplitude and it is given via the formula
\begin{equation}\label{amplitude}
A(k,\theta',\theta)= \I \e^{-\rmi k (\theta',y)}V(y,|u|)u(y,k,\theta)\d y.
\end{equation}
The scattering amplitude (or far-field pattern) depends on the wavenumber and the direction of the incident wave, as well as on the direction of observation, $\theta'\in\s^{n-1}$. 

In \cite{HKST2021, HKS2022}, the Saito's formula was proved in dimensions $n=2$ and $n=3$, respectively. As a direct consequence of Saito's formula, it follows that the knowledge of full scattering data (scattering amplitude is known in all directions $\theta,\theta'\in\s^{n-1}$ and for arbitrarily large $k>0$) uniquely determines certain combination of first- and zero-order perturbations. In \cite{KS2022}, the backscattering problem was considered. By using Born approximation, it was proved that the main singularities of the same combination of perturbations may be reconstructed from the backscattering data.
From the point of view of fixed angle scattering, we mention the paper by Stefanov \cite{S1992}. There, generic uniqueness of the fixed incident angle scattering problem was proved for Schrödinger equation in $\R^3$. These results were extended for a wider class of potentials by Barcelo et al. in \cite{B&all2020}. In \cite{R2001}, the Schrödinger equation was considered and the method of Born approximation was used to show that the main singularities of a non-smooth potential may be recovered from single observation angle.
We also mention the works \cite{FH2017, FHS2013,S2008, SS2010, S2012, L2011, M2018, MPS2021} where the fixed angle scattering problems are considered in various settings. In particular, the non-linear operators are studied in \cite{FH2017, FHS2013, SS2010, L2011} and the present paper follows the footsteps of \cite{FH2017, FHS2013}.
As an example of higher order operators (and their quasi-linear generalisations) we mention the theory of vibrations of beams and study of elasticity (see \cite{G2010}). 

The following notations are used throughout the text. The symbol 
$L^p_\delta(\R^n)$, \\$1\le p\le\infty, \delta\in\R$ denotes the $p$-based Lebesgue space over $\R^n$ with norm
$$
\| f\|_{L_\delta^p} =\left( \I (1+|x|)^{\delta p} |f(x)|^p \d x\right)^{1/p}.
$$
The weighted Sobolev spaces $W^m_{p,\delta}(\R^n)$ are defined as the spaces of functions whose weak derivatives up to order $m\ge 0$ belong to $L^p_\delta(\R^3)$ and the norm is defined as follows,$$\| f\|_{W_{p,\delta}^m} =\sum_{|\alpha|\le m} \|D^\alpha f\|_{L_\delta^p} .$$ For $L^2$-based space we use the special notation $H^m_\delta(\R^n)=W^m_{2,\delta}(\R^n)$.
Throughout the text the symbol $C$ (compare with the constants $C$ with some special index and special meaning) is used to denoted generic positive constant whose value may change from line to line.

\section{Inverse scattering}
We consider the inverse problem of recovering the potential $V(\cdot,1)$ from the fixed angel scattering data. That is, we assume that for a fixed incident angle $\theta_0\in\s^{n-1}$ the scattering amplitude is known for all possible observation angles $\theta'\in\s^{n-1}$ and for all wave numbers $k\ge k_0$, where $k_0>0$ is as in Theorem \ref{t1}. From now on, we assume that the function $V$ is real-valued.
From the formula \eqref{amplitude}, we have
\begin{align*}
    A(k,\theta',\theta_0)   & = \I \e^{-\rmi k(\theta'-\theta_0,y)} V(y,1) \d y\\ & + \I \e^{-\rmi k(\theta',y)} \big[ \left(V(y,|u|)-V(y,1)\right)u_0(y)+  V(y,|u|)\usc(y) \big]\d y,
\end{align*}
where the latter term can be estimated as $C \|\usc\|_{L^\infty}$.
Thus, for large values of $k>0$, we have
\begin{align*}
    A(k,\theta',\theta_0) \approx \I \e^{-\rmi k(\theta'-\theta_0,y)} V(y,1)\d y = \mathcal{F}\big(V(\cdot,1)\big)(k(\theta'-\theta_0)).
\end{align*}
This heuristic justifies the following definition for the inverse fixed-angle Born approximation
$$
q_B^{\theta_0} (x) = \mathcal{F}^{-1}\left( A(k,\theta',\theta_0) \right) (x),
$$
when we take into account that the inverse Fourier transform is considered in some special coordinates. Indeed, considering $\xi=k(\theta'-\theta_0)$, we have
$$
k= \frac{|\xi|}{2(\hat{\xi},\theta_0)},\quad \theta' = \theta_0 -2(\theta_0,\hat{\xi})\hat{\xi}, \quad \hat{\xi}= \frac{\xi}{|\xi|}
$$
and $\d\xi = \frac{1}{4}|k|^{n-1}|\theta'-\theta_0|^2 \d k\d \theta$. 
Therefore, we give the following definition
\begin{definition}\label{d1}
The inverse fixed-angle Born approximation is given by the formula
\begin{equation*}%\label{definition}
    q_B^{\theta_0}(x) := \frac{1}{4(2\pi)^n} \int_{-\infty}^\infty |k|^{n-1}\Is \e^{-\rmi k(\theta_0-\theta',x)}A(k,\theta',\theta_0)|\theta'-\theta_0|^2\d \theta'\d k.
\end{equation*}
\end{definition}
Since Theorem \ref{t1} guarantees the existence of function $\usc$ only when $k\ge k_0$ with some $k_0>0$, the equation \eqref{amplitude} only concerns those values of $k\ge k_0$. For practical reasons, we extend the domain of the scattering amplitude to the whole real-axis by setting $A(k,\theta',\theta_0) = 0$, when $|k|<k_0$ and $A(k,\theta',\theta_0) = \overline{A(-k,\theta',\theta_0)}$, when $k\le -k_0$.
Similarly, we extent the domains of functions $\usc^{(j)}$ (see \eqref{eq3}) to negative values of $k\le -k_0$ by setting
\begin{equation}\label{ext_seq}
    \usc^{(j)}(x) := - \I \widetilde{\G}(|x-y|) V(y,|u_0+\usc^{(j-1)}| ) (u_0 + \usc^{(j-1)} )\d y,
\end{equation}
where $\widetilde{\G}=\G$, when $k>0$, and $\widetilde{\G}= \overline{G^+_{-k}}$, when $k<0$. Similarly, the parts corresponding to the Hankel and Macdonald functions are extended the same way and we have
$\widetilde{\G} = \widetilde{G_k^H} + \widetilde{G_k^K}$.

The reconstruction of jumps and main singularities of the perturbation $V(\cdot,1)$ follows from the main theorem of present paper, which is formulated as
\begin{theorem}[Main theorem]\label{t2}
    Let the real-valued function $V$ satisfy assumption \ref{a1}, with some $2\le p\le\infty$. Then,
    $$
    q_B^{\theta_0}(x) - V(x,1) \in H^t(\R^n) \qquad (\mathrm{mod} \; \Dot{C}(\R^n)),
    $$
    for any $t<\frac{6-n}{2}$.
\end{theorem}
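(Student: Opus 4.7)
The plan is a Born-series expansion in the spirit of Ruiz \cite{R2001} for Schrödinger, adapted to the biharmonic resolvent. Writing $u = u_0 + \usc^{(1)} + r_1$ with $r_1 = \usc-\usc^{(1)}$ (so $\|r_1\|_{L^\infty}\le C k^{-2\nu}$ by \eqref{eq10}), substituting into \eqref{amplitude}, and expanding $V(y,|u|)$ via Assumption \ref{a1}, I would decompose the amplitude as $A = A_0 + A_1 + R$. Here $A_0(k,\theta',\theta_0)=\F(V(\cdot,1))(k(\theta'-\theta_0))$ is the linear Born term; $A_1$ is the leading correction, consisting of the double-scattering integral $-\I \e^{-\rmi k(\theta',y)} V(y,1)\usc^{(1)}(y)\d y$ together with the first nonlinear piece proportional to $V^*(\cdot,1)(|u_0+\usc^{(1)}|-1)u_0$; and $R$ gathers all remaining contributions, each of pointwise size $O(k^{-2\nu})$. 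By the Jacobian computation preceding Definition \ref{d1}, the inverse transform of $A_0$ returns exactly $V(x,1)$, so the conclusion reduces to showing that the inverse transforms $Q_1$ and $Q_R$ of $A_1$ and $R$ lie in $H^t(\R^n)$ and $\dot{C}(\R^n)$ respectively.

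For the remainder, I would bound each summand of $R$ by Hölder's inequality, using $\|r_1\|_{L^\infty}\le C k^{-2\nu}$, the uniform boundedness of $u_0$ and $\usc$, and the $L^1$-integrability of $V(\cdot,1), V^*(\cdot,1), V^{**}$ guaranteed by the decay $\mu>n$ in Assumption \ref{a1}. This yields $|R|\le C k^{-2\nu}$, so combined with $|k|^{n-1}|\theta'-\theta_0|^2\le 4|k|^{n-1}$ from Definition \ref{d1}, the $k$-integrand is dominated by $C|k|^{n-1-2\nu}$, which is integrable on $[k_0,\infty)$: in $n=2$ one takes $\nu$ close to $2$, and in $n=3$ with $p\ge 2$ one has $2\nu>3$. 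Uniform integrability gives continuity of $Q_R$, while a Riemann--Lebesgue argument in the $k$-integration gives decay at infinity, placing $Q_R\in \dot{C}(\R^n)$.

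For the main correction $Q_1$, I would insert the Fourier representation
$$\G(|x|)=\frac{1}{(2\pi)^n}\I \frac{\e^{\rmi(\xi,x)}}{|\xi|^4-k^4-\rmi 0}\d\xi$$
into $A_1$, shift $\xi\mapsto\xi+k\theta_0$, and use $k(\theta'-\theta_0)=\eta$ to reparameterize the $(k,\theta')$-integration in Definition \ref{d1}. The Fourier transform $\widehat{Q_1}(\eta)$ then becomes a bilinear form in $\F(V(\cdot,1))$ (plus an analogous piece in $\F(V^*(\cdot,1))$) against the multiplier $(|\xi+k(\eta)\theta_0|^4-k(\eta)^4-\rmi 0)^{-1}$. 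Factoring the denominator as $(|\xi+k\theta_0|^2-k^2)(|\xi+k\theta_0|^2+k^2)$ exhibits one elliptic factor, gaining two derivatives, together with one resolvent factor whose singularity sits on the characteristic surface $\Sigma_k=\{|\xi+k\theta_0|^2=k^2\}$. To estimate $\|(1+|\eta|)^t\widehat{Q_1}\|_{L^2}$, I would treat the off-surface region $\mathrm{dist}(\xi,\Sigma_k)\gtrsim 1$ by Plancherel combined with the elliptic gain, and a tubular neighbourhood of $\Sigma_k$ by the Sokhotski--Plemelj decomposition together with a curvature-based Stein--Tomas-type restriction estimate on $\Sigma_k$. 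Balancing these against the hypothesis $V,V^*\in L^p_\mathrm{loc}$ with $p\ge 2$ produces the exponent $t<(6-n)/2$; the improvement over the Schrödinger situation comes precisely from the additional ellipticity factor of the biharmonic resolvent. The principal obstacle is this sharp resolvent-plus-restriction analysis near $\Sigma_k$, carried out uniformly in $k$. Higher-order Born iterates $Q_j$ for $j\ge 2$ are handled by repeating the same scheme with additional factors of $k^{-\nu}$ coming from \eqref{eq10}, and thereby yield contributions that fall into $H^t$ with at least the same exponent or into $\dot{C}(\R^n)$, and so are absorbed into the claimed decomposition.
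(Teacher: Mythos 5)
Your overall architecture matches the paper's: expand $u=u_0+\usc^{(1)}+(\usc-\usc^{(1)})$, split the amplitude into the linear Born term, a quadratic (first nonlinear) term, and a remainder of size $O(k^{-2\nu})$, and handle the remainder by crude integration in $k$ (your $L^1$-in-$\xi$ argument placing it in $\dot C(\R^n)$ is acceptable; the paper instead puts it in $H^\delta$, $\delta<\frac{8-n}{2}$, and also treats the small-$|k|$ cutoff term $A_\infty$ explicitly, a point you skip since the amplitude only exists for $|k|\ge k_0$ — minor, but it must be said).

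The genuine gap is in the only hard part of the theorem: the proof that the quadratic term lies in $H^t(\R^n)$ for $t<\frac{6-n}{2}$. You reduce it to "a sharp resolvent-plus-restriction analysis near $\Sigma_k=\{|\xi+k\theta_0|^2=k^2\}$, carried out uniformly in $k$", invoke a Stein--Tomas-type estimate, and assert that "balancing" gives the exponent $t<\frac{6-n}{2}$ — but none of this is carried out, and as stated it cannot work from the hypotheses you actually use. You only invoke $V(\cdot,1),V^*(\cdot,1)\in L^p_{\mathrm{loc}}$, $p\ge 2$; yet after the Sokhotski--Plemelj split the principal-value part of the bilinear form is not absolutely convergent for mere $L^2$ Fourier data, so a restriction bound on the delta/surface-measure part alone cannot close the estimate, and no mechanism in your sketch produces the extra factor $|w|^{-1}$ beyond the elliptic gain that is needed to reach $t<\frac{6-n}{2}$. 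The paper's route is concrete at exactly this point: it computes the $2n$-dimensional Fourier transform of the quadratic term explicitly via the change of variables $\zeta=k(\theta'-\theta_0)$ (Lemma 2.5), obtaining the explicit kernels with prefactor $(\xi+\eta,\theta_0)^3/|\xi+\eta|^4$ and a P.V. denominator, and then (Lemma 2.6) proves the pointwise bound $|\mathcal{F}(q_1)(w)|\le C|w|^{-3}$ by splitting the P.V. integral into three regions as in Ola--Päivärinta--Serov and estimating the near-singular shell with Haj\l asz's inequality and the Hardy--Littlewood maximal function; this step requires $\widehat{\mathcal V},\widehat{V(\cdot,1)}\in H^1(\R^n)$, i.e. $V(\cdot,1),V^*(\cdot,1)\in L^2_1(\R^n)$, which is precisely where $p\ge 2$ together with the decay $\mu>n$ of Assumption 1.1 enters. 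Your proposal never uses the decay hypothesis at all, which is a concrete sign that the key estimate has not actually been obtained; to complete the proof you must either reproduce such a weighted/maximal-function argument for the P.V. part or supply a genuine uniform-in-$k$ bilinear restriction estimate with a quantified gain, neither of which is present. (Your closing remark about higher-order Born iterates $Q_j$, $j\ge 2$, is also redundant: in your own decomposition these are already inside the remainder $R$.)
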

\begin{proof}[Proof of Main theorem]
Following \cite{FHS2013}, we begin the proof by noting that under assumptions \ref{a1}, we have
\begin{align*}
    V(y,|u|)u(y) &= V(y,1)u_0 + \mathcal{V} (y) \usc +  \frac{1}{2} V^*(y,1)u_0^2\ \overline{\usc} + \widetilde{V}(y,s^*) O(|\usc|^2),
\end{align*} 
where $\mathcal{V}(y) = V(y,1) + \frac{1}{2}V^*(y,1)$ and $\widetilde{V}(y,s^*)= V^*(y,1) + V^{**}(y,s^*)$.
By substituting this into \eqref{amplitude}, we obtain
\begin{align*}
    A(k,\theta'&,\theta_0) = \I \e^{-\rmi k(\theta',y)}V(y,|u|)u(y)\d y \\
    &  =\I \e^{-\rmi k(\theta'-\theta_0,y)} V(y,1)\d y - (1-\chi(k)) \I \e^{-\rmi k (\theta'-\theta_0,y)}V(y,1) \d y \\
    &\quad+ \chi(k)\I \e^{-\rmi k(\theta',y)} \left( \mathcal{V}(y)\usc^{(1)} + \frac{1}{2}V^*(y,1) u_0^2\overline{\usc^{(1)}} \right)\d y\\
    &\quad + \chi(k) \I \e^{-\rmi k(\theta',y)}\bigg( \mathcal{V}(y)(\usc-\usc^{(1)})\\ 
    & \qquad \qquad \qquad \qquad + \frac{1}{2}V^*(y,1)u_0^2\overline{(\usc - \usc^{(1)})} + \widetilde{V}(y,s^*) O(|\usc|^2)\bigg)\d y \\
    &  =: A_0(k,\theta')+A_\infty (k,\theta')+A_1 (k,\theta')+A_R(k,\theta'),
\end{align*}
where $\chi$ is the characteristic function of the set $\R \setminus ]-k_0,k_0[$.
Therefore, to prove the theorem, it is enough to consider terms $A_\infty$, $A_1$ and $A_R$ as functions of variable $\mu = k(\theta'-\theta_0)$. 
Clearly, the support of function $A_\infty$ is compact and thus its contribution to the difference $q_B^{\theta_0} - V(\cdot,1)$ is smooth ($C^\infty(\R^n)$). 
Let us next consider the term $A_R$. It follows from the fact that functions $V$ and $V^*$ are $L^1$-integrable, and from the estimates \eqref{est1} and \eqref{eq10}, that there exists a constant $C$ such that
$$
|A_R(k,\theta')|  \le \frac{C}{k^{4}},
$$
uniformly in $|k|\ge k_0$ and $\theta '\in \s^{n-1}$.
Moreover, for the $L_\delta^2$-norms of these terms (with variable $\mu=k(\theta '-\theta_0)$), we have 
\begin{align*}
    &\I (1+|\mu|^2)^\delta |A_R(k,\theta')|^2 \d \mu\\  & \le C\int_{-\infty}^\infty |k|^{n-1}\Is (1+|k(\theta'-\theta_0)|^2)^\delta \dfrac{\chi(k)}{|k|^{8}}\d \theta' \d k \le C \int_{k_0}^\infty k^{n-1+2\delta -8} \d k,
\end{align*}
which is finite for any $\delta < \frac{8-n}{2}$. Hence, 
the contribution of $A_R$ to the difference $q_B^{\theta_0} - V(\cdot,1)$ belongs to the Sobolev space $H^\delta(\R^n)$,
for any $\delta < \frac{8-n}{2}$, as an inverse Fourier transforms of $L_\delta^2$-functions.
The rest of the proof of the main theorem will be given as two separate lemmata.
\begin{lemma}\label{lemma1}
Under the assumptions of Theorem \ref{t2}, the inverse Fourier transform ($\xi = k(\theta'-\theta_0)\rightarrow x $.) of function $A_1$ is of the form
\begin{align}\label{eq11}
    &-\mathcal{F}_{2n}^{-1}\left(\widetilde{\chi}(\xi,\eta) \frac{(\xi+\eta,\theta_0)^3}{|\xi+\eta|^4}\mathrm{P.V.} \frac{\widehat{\mathcal{V}}(\xi)\widehat{V(\cdot,1)}(\eta)}{(|\eta|^2(\xi+\eta) - |\eta+\xi|^2\eta,\theta_0)} \right)(x,x) \notag\\
    &-\mathcal{F}_{2n}^{-1}\left(\widetilde{\chi}(\xi,\eta) \frac{(\xi+\eta,\theta_0)^3}{|\xi+\eta|^4}\mathrm{P.V.} \frac{\widehat{V^*(\cdot,1)}(\xi)\widehat{V(\cdot,1)}(\eta)}{(|\eta|^2(\xi+\eta) + |\eta+\xi|^2\eta,\theta_0)} \right)(x,x) \; (\mathrm{mod}\  H^t(\R^n))\notag\\
    &=: q_1(x) + q_2(x)  \quad (\mathrm{mod} \; H^t(\R^n)),
\end{align}
for any $t<\frac{8-n}{2}$. Here $\mathcal{F}_{2n}^{-1}$ denotes the $2n$-dimensional inverse Fourier transform ($(\xi,\eta)\rightarrow (x,y)$). 
\end{lemma}

\begin{remark}
This lemma allows us to consider $\mathcal{F}^{-1}\left(A_1\right)$ as the first "nonlinear" (quadratic) term in the Born sequence according to Definition \ref{d1} of the inverse fixed-angle Born approximation.
\end{remark}

\begin{proof}[Proof of Lemma \ref{lemma1}]
We split $A_1$ into two parts
\begin{align*}
    A_1(k,\theta') &= \chi(k)\I\!\!\e^{-\rmi k(\theta',y)} \mathcal{V}(y)\usc^{(1)}(y)\d y +\chi(k)\I\!\!\e^{-\rmi k(\theta'-2\theta,y)} V^*(y,1)\overline{\usc^{(1)}(y)}\d y  \\ 
    &= A_1^1 (k,\theta') + A_1^2(k,\theta').
\end{align*}
Substituting \eqref{ext_seq} into $A_1^1$ and splitting the kernel $\widetilde{\G}$ into two parts, one corresponding to the oscillating main part (Hankel function) and the other corresponding to the exponentially decaying part (Macdonald function), yields
\begin{align*}
    &A_1^1(k,\theta')\\ & \quad= \chi(k)\I \e^{-\rmi k(\theta',y)}\mathcal{V}(y) \I \left(\widetilde{G_k^H}(|y-z|) + \widetilde{G_k^K}(|y-z|) \right)V(z,1) \e^{\rmi k(\theta_0,z)} \d z \d y \\
    &\;= A_1^H (k,\theta') + A_1^K(k,\theta').
\end{align*}
Due to the formula $\varphi *\psi = \mathcal{F}^{-1}(\hat{\varphi}\hat{\psi})$, in order to analyse the term $A_1^K$, it is enough to calculate the Fourier transforms of the mappings $z\mapsto \widetilde{G_k^K}(|z|)e^{-\rmi k(\theta_0,z)}$ and $z\mapsto V(z,1)$. Since the Fourier transform of function $G_k^K$ is known, we immediately have
\begin{align*}
    \mathcal{F}\left(\widetilde{G_k^K}(|\cdot|)\e^{-\rmi k(\theta_0,\cdot)}\right)(\xi) = \I \e^{-\rmi (\xi + k\theta_0 ,s)}\widetilde{G_k^K}(|s|) \d s = \frac{1}{2k^2}\frac{1}{|\xi+k\theta_0|^2 + k^2}
\end{align*}
and therefore
\begin{align*}
A_1^K(k,\theta') &= -\frac{\chi(k)}{2k^2}\I \e^{-\rmi k(\theta'-\theta_0,y)}\mathcal{V}(y)\mathcal{F}^{-1} \left( \frac{\widehat{V(\cdot,1)}(\xi)}{|\xi+k\theta_0|^2 + k^2} \right)(y)\d y\\
&= -\frac{\chi(k)}{2k^2} \I\frac{\widehat{\mathcal{V}}(k(\theta'-\theta_0) + \xi)\widehat{V(\cdot,1)}(\xi)}{|\xi+k\theta_0|^2 + k^2}\d \xi.
\end{align*}
Since both $\mathcal{V}$ and $V(\cdot,1)$ are $L^2$--functions, using Hölder inequality and Parseval's identity, we have the following estimate
\begin{align*}
    |A_1^K(k,\theta')| \le C \frac{\chi(k)}{k^4} \|\mathcal{V}\|_{L^2}\|V(\cdot,1)\|_{L^2},
\end{align*}
which holds uniformly in $\theta'\in\s^{n-1}$. Now, by similar calculation that we did for $A_R$, the contribution of $A_1^K$ to the difference $q_B^{\theta_0} - V(\cdot,1)$ belongs to the Sobolev space $H^t(\R^n)$, with any $t<\frac{8-n}{2}$. 
Finally, considering the contribution of $A_1^H$ to the $q_B^{\theta_0}(x)$, we have
\begin{align*}
    \frac{1}{4(2\pi)^n} &\int_{-\infty}^\infty |k|^{n-1} \Is \e^{-\rmi k(\theta_0-\theta',x)}A_1^H(k,\theta')|\theta'-\theta_0|^2\d \theta' \d k \\
    & = \frac{1}{4(2\pi)^n} \int_{-\infty}^\infty |k|^{n-1} \Is \e^{-\rmi k(\theta_0-\theta',x)} \chi(k) \I \e^{-\rmi k(\theta',y)}\mathcal{V}(y) \\ 
    & \qquad  \qquad \qquad \quad \times\I \widetilde{G_k^H}(|y-z|)V(z,1) \e^{\rmi k(\theta_0,z)} \d z \d y|\theta'-\theta_0|^2\d \theta' \d k \\
    & = \frac{1}{4(2\pi)^n} \I\I G_1(x-y,x-z) \mathcal{V}(y)V(z,1)\d y\d z, 
\end{align*}
where 
$$G_1(y,z) = \int_{-\infty}^\infty \chi(k) |k|^{n-1} \Is \widetilde{G_k^H}(|y-z|) \e^{-\rmi k(\theta_0,z) +\rmi k(\theta',y)}|\theta'-\theta_0|^2\d \theta' \d k.$$
Again, by using the formula $\mathcal{F}^{-1}(\hat{\varphi}\hat{\psi}) = \varphi *\psi$, this $2n$--dimensional convolution can be calculated by taking the Fourier transforms of distributions $(y,z)\mapsto G_1(y,z)$ and $(y,z) \mapsto \mathcal{V}(y) V(z,1)$. 
The Fourier transform of $G_k^H$ is known to be
$$
\mathcal{F}\left(G_k^H(|\cdot|)\right) (\xi) = \frac{1}{2k^2} \frac{1}{|\xi|^2 - k^2 -\rmi 0}
$$
and therefore we have
\begin{align*}
&\mathcal{F}_{2n}\left(G_1\right)(\xi,\eta) = \I \I \e^{-\rmi (\xi,y) - \rmi (\eta,z)} G_1(y,z) \d y\d z \\
&=\int_{-\infty}^\infty \!\!\chi(k) |k|^{n-1} \!\!\Is \I \I\!\! \e^{-\rmi (\xi-k\theta',y) - \rmi (\eta+k\theta_0,z)} \widetilde{G_k^H}(|y-z|) \d y\d z |\theta'-\theta_0|^2\d \theta' \d k \\
&= \int_{k_0}^\infty  k^{n-1} \Is \I  \e^{-\rmi (k(\theta_0-\theta') + \xi +\eta,y)}\d y \I \e^{ - \rmi (\eta+k\theta_0,s)} G_k^H(|s|) \d s |\theta'-\theta_0|^2\d \theta' \d k \\
&+ \int_{k_0}^\infty  k^{n-1} \Is \I  \e^{-\rmi (-k(\theta_0-\theta') + \xi +\eta,y)}\d y \I\!\! \e^{ - \rmi (\eta-k\theta_0,s)} \overline{G_k^H(|s|)} \d s |\theta'-\theta_0|^2\d \theta' \d k \\
&= \int_{k_0}^\infty k^{n-1} \Is \I \frac{1}{2k^2}\left(\frac{ \e^{-\rmi (k(\theta_0-\theta') + \xi +\eta,y)}}{|\eta+k\theta_0|^2-k^2-\rmi 0} + \frac{ \e^{-\rmi (-k(\theta_0-\theta') + \xi +\eta,y)}}{|\eta-k\theta_0|^2-k^2+\rmi 0}\right)\d y\\ & \qquad \qquad\qquad \qquad\qquad \qquad\qquad \qquad\qquad \qquad\qquad \qquad  \qquad  \times |\theta'-\theta_0|^2\d \theta' \d k \\
&= \int_{k_0}^\infty k^{n-1} \Is \I \frac{1}{2k^2}\left(\frac{ \e^{-\rmi (k(\theta_0-\theta') + \xi +\eta,y)}}{|\eta|^2+2k(\theta_0,\eta)-\rmi 0} + \frac{ \e^{-\rmi (-k(\theta_0-\theta') + \xi +\eta,y)}}{|\eta|^2-2k(\theta_0,\eta)+\rmi 0}\right)\d y\\ &\qquad \qquad\qquad \qquad\qquad \qquad\qquad \qquad\qquad \qquad\qquad \qquad  \qquad \times |\theta'-\theta_0|^2\d \theta' \d k.
\end{align*}
Considering $\zeta = k(\theta'-\theta_0)$ ($k= \frac{|\zeta|^2}{2(\theta_0,\zeta)}$) and using the fact $\mathcal{F}\left(1\right) = (2\pi)^n\delta$, we have
\begin{align*}
    &\mathcal{F}_{2n}\left(G_1\right)(\xi,\eta) =\!\!\int_\Omega \I \!\!\!\!\frac{2(\zeta,\theta_0)^2}{|\zeta|^4} \left(\frac{ \e^{-\rmi (-\zeta + \xi +\eta,y)}}{|\eta|^2-\frac{|\zeta|^2(\theta_0,\eta)}{(\zeta,\theta_0)}-\rmi 0} + \frac{ \e^{-\rmi (\zeta + \xi +\eta,y)}}{|\eta|^2+\frac{|\zeta|^2(\theta_0,\eta)}{(\zeta,\theta_0)}+\rmi 0} \right)\d y\d \zeta\\
    & \qquad = 2(2\pi)^n \int_\Omega\frac{(\zeta,\theta_0)^2}{|\zeta|^4}  \left(\frac{ \delta(-\zeta + \xi +\eta)}{|\eta|^2-\frac{|\zeta|^2(\theta_0,\eta)}{(\zeta,\theta_0)}-\rmi 0} + \frac{ \delta(\zeta + \xi +\eta)}{|\eta|^2+\frac{|\zeta|^2(\theta_0,\eta)}{(\zeta,\theta_0)}+\rmi 0} \right) \d \zeta \\
    &\qquad = 4(2\pi)^n \widetilde{\chi}(\xi,\eta) \frac{(\xi+\eta,\theta_0)^3}{|\xi+\eta|^4}\mathrm{P.V.} \frac{1}{(|\eta|^2(\xi+\eta) - |\eta+\xi|^2\eta,\theta_0)},
\end{align*}
where the last equality is due to Sokhotski-Plemelj theorem and cancellation of $\delta$-terms therein.
Here, $\Omega = \left\{ \zeta\in\R^n : \left| \frac{|\zeta|^2}{2(\theta_0,\zeta)} \right|\ge k_0 \right\}$ and $\widetilde{\chi}(\xi,\eta) = \chi_\Omega (\xi+\eta),$ where $\chi_\Omega$ is the characteristic function of the set $\Omega.$  
Similarly, for function $A_1^2(k,\theta')$ the following holds
\begin{align*}
    A_1^2(k,\theta') &= \chi(k) \I \e^{-\rmi k(\theta'-2\theta_0,y)} V^*(y,1) \overline{\usc^{(1)}(y)}\d y\\
    & = \chi(k) \I \e^{-\rmi k(\theta'-2\theta_0,y)}V^*(y,1) \I \left(\overline{\widetilde{G_k^H}(|y-z|)}+\overline{\widetilde{G_k^K}(|y-z|)}\right)\\ &\qquad \qquad \qquad \qquad \qquad \qquad \qquad \qquad \qquad \times V(z,1) \e^{-\rmi k(\theta_0,z)}\d z\d y \\
    &= A_2^H(k,\theta') + A_2^K(k,\theta').
\end{align*}
Repeating the estimations from earlier, it is easy to show that the contribution of the latter term to the function $q_B^{\theta_0}$ belongs to the Sobolev space $H^t(\R^n)$, for any $t<\frac{8-n}{2}$.
Again, the contribution of $A_1^2$ to the function $q_B^{\theta_0}$ is of form
\begin{align*}
    \frac{1}{4(2\pi)^n}\I \I G_2(x-y,x-z)V^*(y,1) V(z,1) \d y\d z, 
\end{align*}
where 
$$
G_2(y,z) = \int_{-\infty}^\infty |k|^{n-1} \chi(k) \Is \e^{\rmi k(\theta',y) - \rmi k(\theta_0,2y-z)} \overline{\widetilde{G_k^H}(|y-z|)} |\theta'-\theta_0|^2 \d \theta'\d k.
$$
Taking the $2n$--dimensional Fourier transform of $G_2$ leads us to 
\begin{align*}
    \mathcal{F}_{2n}&\left(G_2\right)(\xi,\eta) = \I \I \e^{-\rmi(\xi,y) - \rmi(\eta,z) } G_2(y,z) \d y\d z \\
    &= \int_{-\infty}^\infty |k|^{n-1} \chi(k) \Is \I \e^{-\rmi(\xi - k\theta' + 2k\theta_0,y)}\\
    &\qquad \qquad \qquad \qquad \times\I \e^{-\rmi(\eta-k\theta_0,z)}\overline{\widetilde{G_k^H}(|y-z|)} \d z\d y|\theta'-\theta_0|^2 \d \theta'\d k\\
    &= \int_{-\infty}^\infty |k|^{n-1} \chi(k) \Is \I \e^{-\rmi(\xi+\eta + k(\theta_0 - \theta'),y)}\\
    &\qquad \qquad \qquad \qquad\times \I \e^{-\rmi(\eta-k\theta_0,s)}\overline{\widetilde{G_k^H}(|s|)} \d z\d y|\theta'-\theta_0|^2 \d \theta'\d k\\
    &= \int_{k_0}^\infty\!\! k^{n-1} \frac{1}{2k^2}\Is \I  \!\!\left(\frac{\e^{-\rmi(\xi+\eta + k(\theta_0 - \theta'),y)}}{|\eta-k\theta_0|^2 - k^2 + \rmi 0}+\frac{\e^{-\rmi(\xi+\eta - k(\theta_0 - \theta'),y)}}{|\eta+k\theta_0|^2 - k^2 - \rmi 0} \right)\d y \\ & \qquad \qquad \qquad \qquad\qquad \qquad \qquad \qquad\qquad \qquad \qquad \qquad \times|\theta'-\theta_0|^2 \d \theta'\d k \\
    & = \int_\Omega \frac{2(\zeta,\theta_0)^2}{|\zeta|^4}\left( \frac{\delta(\xi+\eta + \zeta)}{|\eta|^2 -2\frac{|\zeta|^2(\theta_0,\eta)}{(\zeta,\theta_0)} + \rmi 0} + \frac{\delta(\xi+\eta - \zeta)}{|\eta|^2 +2\frac{|\zeta|^2(\theta_0,\eta)}{(\zeta,\theta_0)} - \rmi 0}\right)\d \zeta\\
    & = 4(2\pi)^n\widetilde{\chi}(\xi,\eta)\frac{(\xi + \eta,\theta_0)^3}{|\xi +\eta|^4} \mathrm{P.V.}\frac{1}{(|\eta|^2(\xi+\eta) + |\xi+\eta|^2 \eta,\theta_0) }.
\end{align*}
The claim of Lemma \ref{lemma1} follows immediately from the $2n$--dimensional Fourier transforms of $G_1$ and $G_2.$ 
\end{proof}
\begin{lemma}\label{lemma2}
Under the assumptions of Theorem \ref{t1}, the first non-linear term in the Born sequence belongs to the Sobolev space $H^t(\R^n)$, with any $t<\frac{6-n}{2}$.
\end{lemma}
\begin{proof}
Due to Lemma \ref{lemma1} it is enough to consider functions $q_1$ and $q_2$ separately (functions $q_1$ and $q_2$ are defined in the Lemma \ref{lemma1}). 
Proceeding as in \cite{S2008}, we assume, without loss of generality, that $\theta_0 = (1,0)$, when $n=2$ and $\theta_0=(1,0,0)$, when $n=3$. Then, for the Fourier transform of the first term in \eqref{eq11}, we have
\begin{align*}
    \mathcal{F}\left(q_1\right) (w) &= -\frac{\widetilde{\chi}(w) }{(2\pi)^n}\frac{w_1^3}{|w|^4} \mathrm{P.V.}\I  \frac{\widehat{\mathcal{V}}(w-\xi)\widehat{V(\cdot,1)}(\xi)}{|\xi|^2 w_1 - |w|^2\xi_1} \d \xi  \\
    & = -\frac{\widetilde{\chi}(w)}{(2\pi)^n}\frac{w_1^2}{|w|^4} \mathrm{P.V.}\I   \frac{\widehat{\mathcal{V}}(w-\xi-a)\widehat{V(\cdot,1)}(\xi+a)}{|\xi|^2 - |a|^2} \d\xi,
\end{align*}
where the $n$--dimensional vector $a$ is given as $(\frac{|w|^2}{2w_1},0)$ or $(\frac{|w|^2}{2w_1},0,0)$, depending on $n=2,3$.

We split the principal value integral into three parts (cf. \cite{OPS2001}, proof of Proposition 3.1.) and consider it outside the singularity at $\frac{|\xi+\eta|^2}{(\xi+\eta,\theta_0)}=0$ and we have
\begin{align*}
    &\left(-\frac{w_1^2}{(2\pi)^n |w|^4}\right)^{-1} \mathcal{F}\left(q_1\right) (w)  = \mathrm{P.V.}\I   \frac{\widehat{\mathcal{V}}(w-\xi-a)\widehat{V(\cdot,1)}(\xi+a)}{|\xi|^2 - |a|^2} \d\xi \\
    & = \left(\int_{|\xi|\le|a|-1} + \mathrm{P.V.}\int_{|a|-1\le|\xi|\le|a|+1} + \int_{|\xi|\ge|a|+1}\right)  \frac{\widehat{\mathcal{V}}(w-\xi-a)\widehat{V(\cdot,1)}(\xi+a)}{|\xi|^2 - |a|^2} \d\xi \\
    & = I_1(w) + I_2(w) + I_3 (w). 
\end{align*}
Terms $I_1$ and $I_3$ can be directly estimated by Hölder inequality to obtain the following estimates
$$
\|I_1\|_{L^\infty}, \; \|I_3\|_{L^\infty} \le \frac{C}{|w|} \|\mathcal{V}\|_{L^2} \|V(\cdot,1)\|_{L^2}
$$
and thus we have that their contributions to $\mathcal{F}\left(q_1\right) (w)$ are locally $L^\infty(\R^n)$ and outside some ball they are $O(|w|^{-3})$.
Now, considering $I_2$, we switch to polar-coordinates and denote the reflection about circle $r=|a|$ by $r^*$ and we have 
\begin{align*}
    &I_2(w) = \lim_{\varepsilon \to 0+} \Big[
    \int_{|a|-1}^{|a|-\varepsilon} r^{n-1} \Is \frac{\widehat{\mathcal{V}}(w-r\omega-a)\widehat{V(\cdot,1)}(r\omega+a)}{r^2 - |a|^2} \d \omega \d r \\
    & \qquad \qquad \qquad + \int_{|a|+\varepsilon}^{|a|+1} r^{n-1} \Is \frac{\widehat{\mathcal{V}}(w-r\omega-a)\widehat{V(\cdot,1)}(r\omega+a)}{r^2 - |a|^2} \d \omega \d r \Big]\\
    & = \lim_{\varepsilon \to 0+}\int_{|a|-1}^{|a|-\varepsilon} \!\!r^{n-1} \Is \!\!\!\!\frac{( \widehat{\mathcal{V}}(w-r\omega-a) - \widehat{\mathcal{V}}(w-r^*\omega-a))\widehat{V(\cdot,1)}(r\omega+a)}{(r - |a|)(r+|a|)} \d \omega \d r\\
    &+\lim_{\varepsilon \to 0+}\int_{|a|-1}^{|a|-\varepsilon} \!\!r^{n-1} \Is \!\!\!\!\frac{ \widehat{\mathcal{V}}(w-r^*\omega-a)(\widehat{V(\cdot,1)}(r\omega+a)-\widehat{V(\cdot,1)}(r^*\omega+a) )}{(r - |a|)(r+|a|)} \d \omega \d r\\
    &+\lim_{\varepsilon \to 0+}\int_{|a|-1}^{|a|-\varepsilon} \Is \!\!\!\!\frac{ \widehat{\mathcal{V}}(w-r^*\omega-a)\widehat{V(\cdot,1)}(r^*\omega+a) }{(r - |a|)}\left[ \frac{r^{n-1}}{r+|a|} - \frac{(r^*)^{n-1}}{r^*+|a|}\right] \d \omega \d r\\
    &= A_1(w) + A_2(w) + A_3(w).
\end{align*}
Since both functions $\mathcal{V}$ and $V(\cdot,1)$ belong to the weighted Lebesgue space $L_1^2(\R^n)$, their Fourier transforms belong to the Sobolev space $H^1(\R^n)$. Therefore, we may estimate terms $A_1$ and $A_2$ by using the inequality of Haj\l asz (see \cite{H1996}) and we have
\begin{align*}
    |A_1(w)| &\le \frac{2}{|w|} \int_{|a|-1\le |\xi|\le |a|} \left(M(\nabla \widehat{\mathcal{V}})((w-\xi-a)) +M(\nabla \widehat{\mathcal{V}}) (w-\xi^*-a)\right)
    \\& \qquad\qquad\qquad\qquad\qquad\qquad\qquad\qquad\qquad\qquad\times|\widehat{V(\cdot,1)}(\xi+a)|\d \xi \\
    & \le \frac{C}{|w|} \|M(\nabla \widehat{\mathcal{V}})\|_{L^2(\R^n)} \|V(\cdot,1)\|_{L^2(\R^n)}
\end{align*}
and similarly,
$$
|A_2(w)| \le \frac{C}{|w|} \|\mathcal{V}\|_{L^2(\R^n)} \| M(\nabla \widehat{V(\cdot,1)}) \|_{L^2(\R^n)}.
$$
Here $M:\ L^p(\R^n) \to L^p(\R^n)$ is the Hardy-Littlewood maximal function. 
For $A_3$ we use the elementary estimate 
$\left|\frac{r^{n-1}}{r+|a|} - \frac{(r^*)^{n-1}}{r^*+|a|}\right| \le C r^{n-1} |r-|a||/|a|$ and conclude that
$$
|A_3(w)| \le \frac{C}{|w|} \|\mathcal{V}\|_{L^2} \|V(\cdot,1)\|_{L^2}.
$$
Now the lemma follows, since we have shown that
$$
\left|\mathcal{F}\left(q_1\right) (w)\right| \le \frac{C}{|w|^3}, 
$$
for large values of $|w|\ge R > 0$. Direct calculation yields that $\mathcal{F}\left(q_1\right)\in L^2_{t}(\R^n)$, with any $t<\frac{6-n}{2}$ and further, $q_1\in H^t(\R^n)$. Repeating this procedure for $q_2$ finishes the proof of Lemma \ref{lemma2}.
\end{proof}
Lemmata \ref{lemma1} and \ref{lemma2} yield the final estimates and thus finish the proof of the main theorem.
\end{proof}
%\end{linenumbers}
\section*{Acknowledgements}
This work was supported by the Academy of Finland (grant
number 312123, the Centre of Excellence of Inverse Modelling and Imaging 2018-
2025).

\end{document}